\documentclass[12pt]{amsart}
\usepackage[dvipdfmx]{graphicx}
\usepackage{color}
\usepackage{amsmath,amssymb,amsthm}
\usepackage{amscd}
\usepackage{comment}
\usepackage{overpic}
\usepackage{tabularx}
\usepackage[dvipdfm,left=30truemm,right=30truemm,top=30truemm,bottom=30truemm]{geometry}

\usepackage{bm}

\theoremstyle{plain} 
\newtheorem{theorem}{Theorem}[section] 

\newtheorem{corollary}[theorem]{Corollary}
\newtheorem{proposition}[theorem]{Proposition}

\theoremstyle{definition}

\newtheorem{remark}[theorem]{Remark}

\makeatletter
\def\Bline{%
\noalign{\ifnum0=`}\fi\hrule \@height 1pt \futurelet
\reserved@a\@xhline}
\@addtoreset{equation}{section}

\makeatother

\allowdisplaybreaks[3]

\newcommand{\R}{\mathbb{R}}

\renewcommand{\phi}{\varphi}
\renewcommand{\epsilon}{\varepsilon}

\renewcommand{\geq}{\geqslant}

\newcommand{\ma}{\mathcal{A}}

\begin{document}

\title[Height functions on singular surfaces with $H_k$ singularities]
{Height functions on singular surfaces parameterized by
smooth maps $\mathcal{A}$-equivalent to $H_k$}

\author[M.~Hasegawa]
{Masaru Hasegawa}

\address[Masaru Hasegawa]{%
Department of Information Science, Center for Liberal Arts and Sciences, Iwate Medical University,
1-1-1 Idaidori, Yahaba-cho, Shiwa-gun, Iwate 028-3694, Japan.}
\email{mhase@iwate-med.ac.jp}
%
%
\subjclass[2020]{%
Primary 53A05, 
Secondary 58K05 
}
\keywords{%
Singular surface, height function, simple map-germ
} 

\thanks{The research underlying this work was supported by FAPESP post-doctoral grant 2013/02543-1 during the author's post-doctoral period at ICMC-USP}


\begin{abstract}
We study singularities of height functions on singular surfaces in $\mathbb{R}^3$ parameterized by smooth map-germs $\mathcal{A}$-equivalent to $H_k$ in Mond's classification, and the versality of the family of the height functions. 
We also study relations of the singularities of the height functions with the parabolic locus of these singular surfaces.
\end{abstract}


\maketitle


\section{Introduction}
\label{sec:intro}

Two map-germs $f, g\colon(\R^2,\bm{0})\to(\R^3,\bm{0})$ are said to be \textit{$\mathcal{A}$-equivalent} if there exist germs of diffeomorphisms $\phi\colon(\R^2,\bm{0})\to(\R^2,\bm{0})$ and $\Phi\colon(\R^3,\bm{0})\to(\R^3,\bm{0})$ such that $g = \Phi \circ f \circ \phi^{-1}$. 
In \cite{Mond1985}, D.~Mond classified smooth map-germs $(\R^2,\bm{0})\to(\R^3,\bm{0})$ under $\mathcal{A}$-equivalence and gave a list (Table \ref{tab:A-simple}) of normal forms of the map-germs.

\begin{table}[ht!]
\caption{Classes of $\mathcal{A}$-simple map-germs.}
\centering
\begin{tabular}{ccc}
\Bline
Name & Normal form & $\mathcal{A}$-codim.\\\hline
Immersion & $(x,y,0)$ & $0$ \\
Whitney umbrella ($S_0$) & $(x,y^2,x y)$ & $2$ \\
$S_k^\pm$ & $(x,y^2,y^3 \pm x^{k+1} v)$, $k\geq1$ & $k+2$\\
$B_k^\pm$ & $(x,y^2,x^2y \pm y^{2k+1})$, $k\geq2$ & $k+2$\\
$C_k^\pm$ & $(x,y^2,x y^3 \pm x^k y)$, $k\geq3$ & $k+2$\\
$F_4$ & $(x,y^2,x^3y + y^5)$ & $6$\\
$H_k$ & $(x,x y + y^{3k-1},y^3)$, $k\geq2$ & $k+2$\\\Bline
\end{tabular}

 (When $k$ is even, $S_k^+$ is equivalent to $S_k^-$, and $C_k^+$ to $C_k^-$.)
\label{tab:A-simple}
\end{table}

The contact of surfaces with planes can be measured by $\mathcal{K}$-singularities of height functions on the surfaces in the normal directions of the planes. 
Two map-germs $f, g\colon(\R^2,\bm{0})\to(\R,0)$ are said to be \textit{$\mathcal{K}$-equivalent} if there exist a germ of diffeomorphism $\phi\colon(\R^2,\bm{0})\to(\R^2,\bm{0})$ and a function-germ $\lambda\colon(\R^2,\bm{0})\to\R$ with $\lambda(\bm{0})\ne0$ such that $g(\bm{x}) = \lambda(\bm{x})f \circ \phi^{-1}(\bm{x})$. 
In this paper, we use $\mathcal{K}$-singularities with normal forms as follows:
\[
A_k\ (\text{or}\ A_k^{\pm}) \colon x^2 \pm y^{k+1}, \quad
D_k\ (\text{or}\ D_k^{\pm}) \colon x^2 y \pm y^{k-1}\,(k\geq 4).
\]

It is well known that parameterized surfaces in $\R^3$ can have singularities of type Whitney umbrella (also called cross cap) and this singularity type is the only stable singularity of maps of $\R^2$ into
$\R^3$.
The differential geometry of Whitney umbrellas is studied in, for example, \cite{BW1998, FH2012, FH2013, GGS2000, HHNSUY2015, HHNUY2014,
Tari2007, West1995}. 
The differential geometry of surfaces in $\mathbb{R}^3$ with corank $1$ singularities has been studied, for example, in \cite{FH2025-2, FH2025, MN-B2015, O-SS2022, O-ST2015, Saji2018, Shimada2024}.

In a joint work \cite{FH2013} of T.~Fukui and the author, they study singularities of height functions on Whitney umbrellas in terms of the extended differential geometric properties via a blowing up obtained in \cite{FH2012}.
In \cite{FH2025-2, FH2025}, they extend the methods in \cite{FH2012,FH2013} to study differential geometry of singular surfaces parameterized by smooth map-germs $\mathcal{A}$-equivalent to one of $S_k$, $B_k$, $C_k$ and $F_4$. 
The $H_k$-case was not treated in \cite{FH2025-2,FH2025}, since their 2-jets are $\mathcal{A}$-equivalent to $(u,v^2,0)$, whereas the normal form of $H_k$ has the 2-jet $\mathcal{A}$-equivalent to $(u,uv,0)$. 
Thus, the methods used in \cite{FH2025-2,FH2025} cannot be applied directly to the $H_k$-case.
In \cite{O-ST2015}, R.~Oset Sinha and F.~Tari studied singular surfaces in $\R^3$ whose parametrizations have an $\mathcal{A}$-singularity of $\mathcal{A}_e$-codimension less than or equal to $3$, appearing in projections of smooth surfaces in $\R^4$ to $\R^3$ and including those with $H_2$ and $H_3$. 
They studied these surfaces via their contact with planes and investigated the singularities of their preparabolic sets and height functions.

In this paper, we study height functions on a singular surface $S$ in $\R^3$ parameterized by smooth map-germs $\mathcal{A}$-equivalent to $H_k$, which was not treated in \cite{FH2025}, for $k\geq 2$, together with the versality of the family of the height functions and their relations with the parabolic set of $S$.

The paper is organized as follows:
In Section 2, we introduce parameterizations of singular surfaces with corank $1$ singularities and several geometric notions for these surfaces.
In Section 3, we describe singularities of height functions on $S$ in terms of the geometric notions introduced in Section 2 (Theorem \ref{thm:height}).
Theorem \ref{thm:height} shows that the types of singularities of height functions on $S$ and the versality of their family do not depend on the degree of degeneracy $k$ of $H_k$.
In Section 4, we describe the singularities in terms of geometric properties of branches of the parabolic set on $S$ (Theorem \ref{thm:branch}).

\section{preliminaries}

\subsection{Parameterizations of singular surfaces}

Let $S$ be a singular surface parameterized by a smooth map-germ $(\R^2,\bm{0})\to(\R^3,\bm{0})$ of corank $1$.
We can make changes of coordinates in the source and rotations in the target, which do not change the geometry of $S$. 
Then the map-germ can be written in the form
\[
(u,v)\mapsto(u,y(u,v),z(u,v)),
\]
where $y, z \in \mathcal{M}_2^2$.
Here, $\mathcal{M}_2$ is the maximal ideal of the local ring of smooth function-germs $(\R^2,\bm{0})\to\R$.

To investigate the differential geometry of $S$, the special parameterizations (obtained by using changes of coordinates in the  source and rotations in the target) of $S$ are useful. 
Such parameterizations are obtained, for example, for Whitney umbrellas  in \cite{West1995} (see also \cite{FH2012}), and for cuspidal edges in \cite{MS2016}.

\begin{proposition}[\cite{FH2025}]
\label{prop:normal_form}
Let $f\colon(\R^2,\bm{0})\to(\R^3,\bm{0})$ be a map-germ of corank 1 at the origin. 
Then, after using rotations in the target and changes of coordinates in the source, we can reduce $f$ to the form
\begin{equation*}
\left(u,\, \frac12 v^2 + \sum_{i=2}^k \frac{b_i}{i!} u^i + O(u,v)^{k+1},\,  \frac12 a_{2,0} u^2 + \sum_{m=3}^k\sum_{i+j=m} \frac{a_{i,j}}{i!j!}u^i v^j + O(u,v)^{k+1}\right),
\end{equation*}
if $j^2 f(\bm{0})$ is $\ma$-equivalent to $(u,v^2,0)$, or
\begin{equation}
\label{eq:normal_form_2}
\left(u,\, u v + \sum_{i=3}^k \frac{b_i}{i!} v^i + O(u,v)^{k+1},\, \frac12 a_{2,0} u^2 + \sum_{m=3}^k\sum_{i+j=m} \frac{a_{i,j}}{i!j!} u^i v^j + O(u,v)^{k+1}\right), 
\end{equation}
if $j^2 f(\bm{0})$ is $\ma$-equivalent to $(u,u v,0)$, where $O(u,v)^n$ consists of the terms of degree geater than or equal to $n$.
\end{proposition}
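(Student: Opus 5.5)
The argument has two stages. First we normalize the linear part and the $2$-jet using a rotation in the target and a linear change of coordinates in the source. Then we remove the unwanted higher-order terms of the second component degree by degree, using nonlinear changes of coordinates in the source only. Rotations are not used after the first stage, since they would alter the geometry we want to keep.

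\emph{Stage 1: the $1$-jet and $2$-jet.} Since $f$ has corank $1$, the image of $df_{\bm 0}$ is a line. We rotate the target so that this line is the $x$-axis and choose the source coordinate $u$ so that $x\circ f = u$; this gives the form $(u,y,z)$ with $y,z\in\mathcal{M}_2^2$ recalled above. The $\mathcal{A}$-class of $j^2f(\bm0)$ is determined by the quadratic parts $(Q_y,Q_z)$ of $y,z$ taken modulo $u$-multiples and linear combinations in the target. We rotate in the $(y,z)$-plane so that, modulo $u$, the $v^2$-coefficient of $z$ vanishes.

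In the case $(u,v^2,0)$, the $v^2$-coefficient of $y$ is nonzero. We rescale $v$ so that $Q_y\equiv \frac12v^2$, and use $v\mapsto v+cu$ to remove the $uv$ term of $Q_y$. The remaining $uv$ term of $Q_z$ must vanish, because otherwise the $2$-jet would be $(u,v^2,uv)$, the Whitney umbrella. Hence $Q_z=\frac12 a_{2,0}u^2$.

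In the case $(u,uv,0)$, both $v^2$-coefficients vanish after the rotation. The $uv$-coefficient must then be nonzero in some component, and a further rotation in the $(y,z)$-plane makes the $uv$-coefficient of $z$ equal to zero. Scaling $v$ makes the coefficient of $uv$ in $y$ equal to $1$. A shift $v\mapsto v+cu$ then removes the $u^2$ term of $Q_y$, since $u(v+cu)=uv+cu^2$. We do not try to normalize the $u^2$ term of $z$; it remains as $\frac12a_{2,0}u^2$.

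\emph{Stage 2: induction on degree.} Suppose $y$ is already normalized up to degree $m-1$. We use source changes of the form $v\mapsto v+h(u,v)$ with $h$ homogeneous of degree $m-1$; these keep $u$ fixed and change $z$ only by terms that we are allowed to keep.

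In the first case, $\frac12(v+h)^2=\frac12v^2+vh+\dots$. Since $vh$ ranges over all degree-$m$ monomials divisible by $v$, every degree-$m$ monomial in $y$ except $u^m$ can be removed. This leaves $\frac{b_m}{m!}u^m$.

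In the second case, $u(v+h)=uv+uh+\dots$. Since $uh$ ranges over all degree-$m$ monomials divisible by $u$, everything in degree $m$ except $v^m$ can be removed. This leaves $\frac{b_m}{m!}v^m$.

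The degree-$m$ part of $z$ is arbitrary, and we simply write it as $\sum_{i+j=m}\frac{a_{i,j}}{i!j!}u^iv^j$. At each step, the new terms created in higher degree are absorbed into later stages of the induction and, eventually, into $O(u,v)^{k+1}$.

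The only delicate point is Stage 1 in the $(u,uv,0)$ case. There one must check that the rotation killing the $uv$-coefficient of $z$ does not reintroduce a $v^2$ term. It does not, because after the first rotation both $v^2$-coefficients are already zero. With that checked, the rest of the argument is routine bookkeeping of the homological-type equation in each degree.
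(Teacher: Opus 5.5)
Your proposal is correct and follows essentially the paper's route: a rotation of the $(y,z)$-plane killing the $uv$-coefficient of $z$, a linear source change $v\mapsto c_{1,0}u+c_{0,1}v$ normalizing the $2$-jet to $(u,uv,\frac12 a_{2,0}u^2)$, and then substitutions $v\mapsto v+h$ removing every monomial of the second component divisible by $u$. The paper does the last step in one substitution where you go degree by degree, and you also supply the analogous argument for the $(u,v^2,0)$ case, which the paper only cites from \cite{FH2025}.

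One harmless slip: the class of the $2$-jet is read off from $(Q_y,Q_z)$ modulo $u^2$, not modulo all $u$-multiples, since the latter would erase the $uv$ term that distinguishes $(u,uv,0)$ from $(u,0,0)$. Your actual case analysis uses the correct criterion, so nothing breaks.
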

\begin{proof}
The proof of the first assertion is given in \cite{FH2025}, so we will give the proof of the second assertion.
 
We may assume that
 \[
j^2f(\bm{0}) = \left(u,\, \frac12 b_{2,0} u^2 + b_{1,1} u v + \frac12 b_{0,2} v^2,\, \frac12 a_{2,0} u^2 + a_{1,1} u v + \frac12 a_{0,2} v^2 \right).
\]
Let $j^2 f(\bm{0})$ be $\mathcal{A}$-equivalent to $(u,u v,0)$.
Then we have 
\[
\begin{vmatrix}
b_{1,1} & b_{0,2}\\
a_{1,1} & a_{0,2}\\
\end{vmatrix} = 0,
\quad (a_{0,2},b_{0,2})=(0,0), \quad \mbox{and} \quad (a_{1,1},b_{1,1})\ne(0,0).
\]
Let $R$ be the orthogonal matrix defined by 
\[
R=
\begin{pmatrix}
1 & 0 \\
0 & R_1
\end{pmatrix}
\quad\text{where}\quad
R_1 = \frac1{\sqrt{a_{1,1}^2 + b_{1,1}^2}}
\begin{pmatrix}
b_{1,1} & a_{1,1}\\
-a_{1,1} & b_{1,1}
\end{pmatrix}.
\]
Then the 2-jet of $R g$ is 
\[
\left(u,\, \dfrac{a_{2,0}a_{1,1}+b_{2,0}b_{1,1}}{2\sqrt{a_{1,1}^2+b_{1,1}^2}}u^2 + \sqrt{a_{1,1}^2+b_{1,1}^2}u v,\, \dfrac{a_{2,0}b_{1,1}-a_{1,1}b_{2,0}}{2\sqrt{a_{1,1}^2+b_{1,1}^2}}u^2\right).
\]
Substituting $v$ by $c_{1,0} u + c_{0,1} v$ and choosing suitable coefficients $c_{1,0}$ and $c_{0,1}$, we show that the 2-jet of $R g$ is
\[
\left(u,\, u v,\, \dfrac{a_{2,0}b_{1,1}-a_{1,1}b_{2,0}}{2\sqrt{a_{1,1}^2+b_{1,1}^2}}u^2\right). 
\]
This proves the second assertion for $k=2$.
Assume that $k\geq 3$.
Substituting $v$ by $v + \sum_{i+j=2}^{k-1} c_{i,j} u^i v^j/(i!j!)$ transforms the second component of $j^k f(\bm{0})$ to 
\[
u v + \sum_{m=3}^k \sum_{i=1}^m\left(\dfrac{b_{i,m-i}}{i!(m-i)!} + \dfrac{c_{i-1,m-i}}{(i-1)!(m-i)!}\right) u^i v^{m-i},
\]
and we can choose $c_{i,j}$ so that each term $u^iv^j$ with $i+j=k$ and $i\neq0$ is zero, which proves the second assertion.
\end{proof}
A similar result to Proposition \ref{prop:normal_form} is shown in
\cite{MN-B2015}. 

\begin{proposition}
\label{prop:H_k}
Let $f$ be a smooth map-germ $f:(\R^2,\bm{0})\to(\R^3,\bm{0})$ given in the form \eqref{eq:normal_form_2}.
If $f$ is of finite $\mathcal{A}$-codimension and $a_{0,3}\ne0$, then $f$ is $\mathcal{A}$-equivalent to $H_k$ for some $k\geq 2$.
In particular, $f$ is $\mathcal{A}$-equivalent to $H_2$ if and only if $a_{0,3}\ne0$ and 
\begin{align}
\label{eq:H2}
4a_{0,5}a_{0,3}b_3 - 5a_{0,4}^2b_3 + 5a_{0,4}a_{0,3}b_4 + 10 a_{0,4}a_{1,2}b_3^2 - 4a_{0,3}^2b_5 - 10a_{1,2}a_{0,3}b_4b_3 \ne 0.
\end{align}
\end{proposition}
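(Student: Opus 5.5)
The plan is to use Mond's classification, in which the $H_k$ series consists exactly of the finitely determined germs whose $3$-jet is $\ma$-equivalent to $(x,xy,y^3)$. The first task is therefore to show that $a_{0,3}\neq0$ forces $j^3f(\bm{0})\sim_{\ma}(u,uv,v^3)$. After that, Mond's result together with finite $\ma$-codimension gives $f\sim_{\ma}H_k$ for some $k\geq2$. The second task is to locate the $5$-jet invariant that separates $H_2$ from the higher $H_k$, and to express it in the coefficients of \eqref{eq:normal_form_2}.

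\textbf{Step 1 (the $3$-jet).} Start from \eqref{eq:normal_form_2} and write $X=u$, $Y=uv+\cdots$, $Z=\cdots$. Subtracting $\frac12 a_{2,0}X^2$ from $Z$ removes the $u^2$ term. The cubic monomials $u^3$, $u^2v$, $uv^2$ in $Z$ are removed next: $u^3$ and $u^2v$ by target terms $X^3$ and $X\,Y$. The term $uv^2$ is removed by a source substitution $v\mapsto v+\lambda u$, combined with target changes of the form $Z\mapsto Z-\mu Y$. The term $b_3v^3/6$ in $Y$ is absorbed by $Y\mapsto Y-(b_3/a_{0,3})Z$. Finally, scaling $v$ and $Z$ by constants uses $a_{0,3}\neq0$ to give $j^3f\sim(u,uv,v^3)$. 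Mond's theorem then yields $f\sim_{\ma}H_k$ for some $k\geq2$.

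\textbf{Step 2 (prenormal form up to order $5$).} Next I will push the reduction to a form
\[
\bigl(u,\; uv+c_4v^4+c_5v^5+O(6),\; v^3+O(4)\bigr).
\]
To get there, reparametrize $v$ by $v\mapsto v+\alpha v^2+\beta v^3+(\text{terms with }u)$ so that the third component becomes exactly $v^3$ modulo terms divisible by $u$. Those $u$-divisible terms are then killed by target terms in $X$ and $XY$, possibly after further $u$-dependent source changes. Every change of $v$ feeds back into the second component; for instance $a_{0,4}$ enters through $\alpha=-a_{0,4}/(4a_{0,3})$-type corrections. Whatever the source changes push into the second component is recorded.

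In the second component, a pure power $v^4$ is removable. Indeed $v^4=v\cdot v^3$, and combining $Y\mapsto Y-\gamma Z$ with $v\mapsto v-\gamma' v^3/u$ is not allowed, so I will instead use the infinitesimal criterion. The term $v^4\,\partial_Y$ lies in $tf(\theta_2)+f^{-1}\theta_3$, because $\partial f/\partial v=(0,u+\cdots,3v^2+\cdots)$, and combining $v^2\,\partial f/\partial v$ with $v\,\partial f/\partial u$ and target terms $Y\,\partial_Z$ generates $v^4\partial_Y$ modulo higher order. On the other hand, $v^5\partial_Y$ is not in that tangent space at the level of the $5$-jet; this is precisely Mond's dichotomy between $H_2$ and $H_{k\geq3}$. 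Hence $f\sim H_2$ if and only if the effective coefficient $\tilde c_5$ of $v^5$ is nonzero, once the contribution coming from eliminating $v^4$ is included.

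\textbf{Step 3 (computing $\tilde c_5$ and the main obstacle).} The remaining work is to compute $\tilde c_5$ explicitly. It collects $b_5$, the feedback terms $b_4a_{0,4}/a_{0,3}$ and $b_3(a_{0,4}^2,\,a_{0,5}a_{0,3})/a_{0,3}^2$ from the normalizing changes of $v$, and the $a_{1,2}b_3b_4$ and $a_{1,2}a_{0,4}b_3^2$ terms from the $uv^2$-elimination step. I expect $\tilde c_5$ to equal the left side of \eqref{eq:H2} times a nonzero constant multiple of a power of $a_{0,3}^{-1}$.

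This bookkeeping is the main obstacle. The delicate part is deciding which fourth- and fifth-order terms are removable and tracking the cross terms generated when $v^4$ is removed. I will handle it by working in the $5$-jet space, computing the tangent space $T\ma f$ modulo $\mathcal{M}_2^6$, and reducing the $v^5\partial_Y$ component modulo it. As a consistency check, I will verify that setting $b_3=b_4=0$ reduces the condition to $b_5\neq0$ after scaling.
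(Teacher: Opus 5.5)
Your overall route matches the paper's. You use $a_{0,3}\neq0$ to bring the $3$-jet to $(u,uv,\frac{a_{0,3}}6v^3)$ and quote Mond for the first assertion. You then use $5$-determinacy of $H_2$ and read off the $v^5$-coefficient of a reduced $5$-jet. Two intermediate mechanisms are wrong, however.

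In Step 1, $Z\mapsto Z-\mu Y$ would put a $uv$-term back into $Z$. What is needed is $Z\mapsto Z-\mu XY-\nu X^3$ and $Y\mapsto Y-\lambda X^2$. You also need a further source change $v\mapsto v+cv^2$ with $c=\frac{a_{1,2}b_3}{2a_{0,3}}$, to cancel the $uv^2$-term that $v\mapsto v-\frac{a_{1,2}}{a_{0,3}}u$ creates in $Y$ through $b_3$. This $c$ is how $a_{1,2}$ enters \eqref{eq:H2}, so it must be tracked.

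In Step 2, the combination you name cannot yield $v^4\partial_Y$. For $f_0=(u,uv,v^3)$, the $Z$-slot of $\omega f_0(\theta_3)$ consists of functions of $(u,uv,v^3)$. So the $3v^4\partial_Z$ left by $v^2\,\partial f_0/\partial v$ cannot be cancelled, and $v\,\partial f_0/\partial u$ leaves $v$ in the $X$-slot. The correct relation is $v^3\,\partial f_0/\partial u-Z\,\partial_X=v^4\partial_Y$, i.e.\ $u\mapsto u-c_4v^3$ followed by $X\mapsto X+c_4Z$. In your prenormal form this creates no $v^5$-term, but only because $Z$ has no $v^4$-term at that stage. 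Otherwise $c_4$ times that term passes into $X$ and returns as a $v^5$-term of $Y$.

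The genuine gap is that \eqref{eq:H2}, the actual content of the second assertion, is only ``expected'', and the procedure you announce would not produce it. The four terms of \eqref{eq:H2} not involving $a_{0,5},b_5$ are quadratic in $a_{1,2},a_{0,4},b_4$. With weights $w(u)=2$, $w(v)=1$, these coefficients lie one step above the initial part $(u,uv+\frac{b_3}6v^3,\frac{a_{0,3}}6v^3)$. Such terms arise only from composing finite changes. Projecting $j^5f$ onto $v^5\partial_Y$ modulo the tangent space of the normalized $3$-jet is linear in the higher coefficients, so it can never give $a_{0,4}^2b_3$ or $a_{0,4}a_{0,3}b_4$.

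Concretely, take $a_{1,2}=0$.
\begin{itemize}
\item After $Y\mapsto Y-\frac{b_3}{a_{0,3}}Z$, the $v^4$-coefficient of $Y$ is $E=\frac1{24}\bigl(b_4-\frac{a_{0,4}b_3}{a_{0,3}}\bigr)$.
\item Killing $\frac{a_{0,4}}{24}v^4$ in $Z$ by $v\mapsto v+\alpha v^2+\cdots$ with $\alpha=-\frac{a_{0,4}}{12a_{0,3}}$ turns $Ev^4$ into $Ev^4+4\alpha Ev^5$.
\item Removing the induced $\alpha uv^2$ from $Y$ by $u\mapsto u(1-\alpha v)$ and $X\mapsto X+\alpha Y$ puts $\alpha Ev^4$ into $X$. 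Removing that sends $-\alpha Ev^5$ back to $Y$.
\end{itemize}
The net $3\alpha E\,v^5$ is exactly the source of the terms $5a_{0,4}a_{0,3}b_4-5a_{0,4}^2b_3$ in \eqref{eq:H2}. Your test $b_3=b_4=0$ only sees the linear term $-4a_{0,3}^2b_5$, so it cannot catch errors here.

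To finish, carry these finite reductions through, including the $c$-terms, to reach $(u,uv+\kappa\,C\,v^5,\frac{a_{0,3}}6v^3)$. Here $C$ is the left side of \eqref{eq:H2} and $\kappa\neq0$ depends only on $a_{0,3}$. Then use $5$-determinacy of $H_2$ for ``if'', as the paper does. For ``only if'', use the fact that $(u,uv,v^3)=j^5H_3$ with $H_3\not\sim H_2$. Your observation that $v^5\partial_Y$ is not in the tangent space at $f_0$ is correct, but it is infinitesimal and does not replace these two facts.
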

\begin{proof}
By a suitable left coordinate change, we can reduce the term of $u^{m+n}v^n$ $(m\geq 0, n\geq0)$ of the third component of $f$ to zero.
So $j^3 f(\bm{0})$ is reduced to
\[
\left(u,\, u v + \dfrac{b_3}6 v^3,\, \dfrac16(3a_{1,2}u v^2 + a_{0,3}v^3)\right).
\]
If $a_{0,3} \ne 0$, then by the right coordinate change
\[
(u,v) \mapsto \left(u,\, - \dfrac{a_{1,2}}{a_{0,3}} u + v + \dfrac{a_{1,2} b_3}{2 a_{0,3}} v^2\right)
\]
and the left coordinate change
\[
(x,y,z) \mapsto \left(x,\, y + \dfrac{a_{1,2}}{a_{0,3}} x^2 - \dfrac{a_{1,2}^3 b_3}{6 a_{0,3}^3} x^3 + \dfrac{b_3}{a_{0,3}}z,\, z + \dfrac{a_{1,2}^2}{2 a_{0,3}} x y + \dfrac{a_{1,2}^3}{6 a_{0,3}^2} x^3
\right),
\]
we reduce $j^3 f(\bm{0})$ to
\[
j^3 f(\bm{0}) = \left(u,\, u v,\, \dfrac{a_{0,3}}6 v^3
\right),
\]
where $(x,y,z)$ is the usual Cartesian coordinate system of $\R^3$.
Therefore, the first assertion follows from Mond's classification \cite[Theorem 4.2.1:2(a)]{Mond1985}.

We next determine when the $\mathcal{A}$-equivalence class obtained
above is $H_2$. 
Since $H_2$ is 5-determined \cite[Theorem 4.2.1:2(b)]{Mond1985}, $f$ is $\mathcal{A}$-equivalent to $H_2$ if and only if $j^5 f(\bm{0})$ is $\mathcal{A}$-equivalent to $(u,\,u v + v^5,\,v^3)$. 

By suitable right and left coordinate changes, as in the proof of the first assertion, we can reduce $j^5f(\bm{0})$ to
$$
\left(u,\, uv+\frac{C}{a_{0,3}}v^5,\, \frac{a_{0,3}}6v^3 \right),
$$
where
$$
C= 4a_{0,5}a_{0,3}b_3-5a_{0,4}^2b_3+5a_{0,4}a_{0,3}b_4+10a_{0,4}a_{1,2}b_3^2-4a_{0,3}^2b_5-10a_{1,2}a_{0,3}b_4b_3.
$$
Hence, $C\ne0$ is equivalent to \eqref{eq:H2}, and therefore $f$ is $\mathcal{A}$-equivalent to $H_2$ if and only if $a_{0,3}\ne0$ and (2.2) holds.
\end{proof}

\subsection{Geometry of singular surfaces}

Let $S$ be a singular surface parameterized by a smooth map-germ $f\colon(\R^2,\bm{0})\to(\R^3,\bm{0})$ of corank 1 at the origin $\bm{0}$. 
At the singular point $f(\bm{0})$, the tangent plane degenerates to a line, that is, the image of $df_{\bm{0}}$ is a line.  
We call this line the {\it tangent line}.
The plane passing through $f(\bm{0})$ perpendicular to the tangent line is called the {\it normal plane}.
 
There exists non-zero vector $\eta\in T_{\bm{0}}\R^2$ such that $df_{\bm{0}}(\eta)=0$.
We call $\eta$ a {\it null vector} (see \cite{KRSUY2005}). 
Let $j^2 f(\bm{0})$ be $\mathcal{A}$-equivalent to $(u,u v,0)$.
The plane passing through $f(\bm{0})$ spanned by $\xi f(\bm{0})$ and $\xi\eta f(\bm{0})$ is called the {\it principal plane},  where $\xi \in T_{\bm{0}}\R^2$ is a non-zero vector such that $\{\xi,\eta\}$ is linearly independent and $\zeta g$ is the directional derivative of a vector valued function $g$ along the direction $\zeta$.
The unit normal vector to the principal plane at the singular point $f(\bm{0})$ is called the \textit{principal normal vector}.

We remark that the definitions of these geometric ingredients are independent of the choice of coordinates in the source (see \cite{HHNUY2014}).
We also remark that the definition of the principal plane is different from that for $S$ parameterized by $f$ whose $2$-jet is $\mathcal{A}$-equivalent to $(u,v^2,0)$ (cf. \cite{FH2025}).

We consider the orthogonal projection of $f$ onto the normal plane.
The projection can be expressed as
\[
(\R^2,\bm{0})\to(\R^2,\bm{0}),\quad(u,v)\mapsto(p(u,v),q(u,v)).
\]
We consider the group $\mathcal{G} = \mathrm{GL}(2,\R) \times \mathrm{GL}(2,\R)$ which acts on $(j^2 p, j^2 q)$. 
The list of $\mathcal{G}$-orbits is given in Table \ref{tab:singular_point} (see, for example, \cite{Gibson1979}). 
We classify the singular points of $S$ on the basis of the $\mathcal{G}$-class of $(j^2 p, j^2 q)$ in Table \ref{tab:singular_point}.
From Proposition \ref{prop:normal_form}, if $j^2f(\bm{0})$ is $\mathcal{A}$-equivalent to $(u,v^2,0)$ then the singular point of $S$ is a hyperbolic, inflection or degenerate inflection point. 
On the other hand, if $j^2f(\bm{0})$ is $\mathcal{A}$-equivalent to $(u,u
v,0)$, then the singular point is either a parabolic or inflection point (see \cite{O-ST2015} for details). 

\begin{table}[ht!]
\centering
\caption{The classification of the singular points.}
\begin{tabular}{cc}
\Bline
$\mathcal{G}$-class & Name \\\hline
$(x^2,y^2)$ & hyperbolic point\\
$(x y,x^2-y^2)$ & elliptic point\\
$(x^2,x y)$ & parabolic point\\
$(x^2\pm y^2,0)$ & inflection point\\
$(x^2,0)$ & degenerate inflection point\\
$(0,0)$ & degenerate inflection point\\\Bline
\end{tabular}
\label{tab:singular_point}
\end{table}

 
One can easily show the following proposition:
\begin{proposition}
\label{prop:properties_of_normal_form} 
Assume that $f$ is given in the form \eqref{eq:normal_form_2}.
\begin{itemize}
\item 
The tangent line is the $x$-axis and the normal plane is the $y z$-plane. 

\item 
The null vector can be chosen as $\eta = \partial_v$ and the principal plane is the $x y$-plane. 

\item 
The principal normal vector is $\pm \partial_z$. 

\item 
The point $f(\bm{0})$ is an inflection $($resp. parabolic$)$ point if and only if $a_{2,0} = 0$ $($resp. $a_{2,0} \ne 0$$)$.
\end{itemize}
\end{proposition}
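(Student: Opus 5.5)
The plan is to verify each bullet directly from the form \eqref{eq:normal_form_2}, computing low-order derivatives of $f$ at the origin.

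\emph{Tangent line and normal plane.} We have $f_u(\bm{0})=(1,0,0)$ and $f_v(\bm{0})=(0,0,0)$, because $y=uv+O(3)$ and $z\in\mathcal{M}_2^2$. Hence the image of $df_{\bm{0}}$ is spanned by $\partial_x$. So the tangent line is the $x$-axis, and the plane through the origin perpendicular to it is the $yz$-plane.

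\emph{Null vector, principal plane and principal normal.} Since $df_{\bm{0}}(\partial_v)=f_v(\bm{0})=0$, we may take $\eta=\partial_v$ and $\xi=\partial_u$. Then $\xi f(\bm{0})=(1,0,0)$. Next,
\[
\xi\eta f(\bm{0})=f_{uv}(\bm{0})=(0,1,a_{1,1})=(0,1,0),
\]
since \eqref{eq:normal_form_2} contains no $uv$ term in the third component. Thus the principal plane is spanned by $\partial_x$ and $\partial_y$, i.e.\ it is the $xy$-plane, and its unit normal is $\pm\partial_z$.

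The principal plane is independent of the choice of $\xi$, which the paper already notes via \cite{HHNUY2014}. To see it directly here: replacing $\xi$ by $\alpha\xi+\beta\eta$ with $\alpha\neq0$ changes $\xi\eta f(\bm{0})$ by adding $\beta f_{vv}(\bm{0})=0$ and rescaling by $\alpha$. Replacing $\eta$ by a multiple only rescales it.

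\emph{Classification of the singular point.} Project $f$ onto the $yz$-plane to get $(p,q)=(y,z)$. Their $2$-jets are
\[
j^2p=uv,\qquad j^2q=\tfrac12 a_{2,0}u^2.
\]
If $a_{2,0}\neq0$, the pair $(uv,\tfrac12 a_{2,0}u^2)$ is $\mathcal{G}$-equivalent to $(x^2,xy)$: swap the components and rescale with the target $\mathrm{GL}(2,\R)$, then relabel variables. This is a parabolic point by Table~\ref{tab:singular_point}.

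If $a_{2,0}=0$, the pair is $(uv,0)$. Since $uv=\tfrac14\bigl((u+v)^2-(u-v)^2\bigr)$, a linear change of source coordinates makes it $(x^2-y^2,0)$, which is an inflection point.

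These two cases exhaust the possibilities, so the equivalence stated in the last bullet follows. There is no genuine obstacle in any of these steps.
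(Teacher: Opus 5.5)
Your proposal is correct. It is the direct verification the paper intends when it says the proposition can be ``easily shown'' (the paper gives no written proof): you read off $df_{\bm{0}}$ and $f_{uv}(\bm{0})=(0,1,0)$, and you match the $2$-jets $(uv,\tfrac12 a_{2,0}u^2)$ of the projection to the $yz$-plane against Table~\ref{tab:singular_point}. Your extra observation that $f_{vv}(\bm{0})=0$, which makes the principal plane independent of the choice of $\xi$, is a small addition that the paper only delegates to \cite{HHNUY2014}.
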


\begin{remark}
For the parameterization \eqref{eq:normal_form_2},
we can take $\eta=\partial_v$, and hence $\eta^3f(\bm{0})=(0,b_3,a_{0,3})$ in the normal plane. 
The sign of $a_{2,0}$ can be normalized without changing this vector.
Indeed, consider the source coordinate change $\phi(u,v)=(u,-v)$ and the rotation $T(x,y,z)=(x,-y,-z)$ of $\R^3$ by $\pi$ about the $x$-axis, and set $\widehat{f}=T\circ f\circ\phi$.
Then $\widehat{f}$ is again of the form \eqref{eq:normal_form_2}, with $\widehat{a}_{2,0}=-a_{2,0}$, while $\eta^3\widehat{f}(\bm{0})=(0,b_3,a_{0,3}) =\eta^3f(\bm{0})$.
Consequently, replacing $f$ by $\widehat{f}$ when $a_{2,0}<0$, we may assume that $a_{2,0}\geq0$ while keeping $\eta^3f(\bm{0})$ fixed (cf. \cite[Introduction]{HHNUY2014} and \cite[Theorem~3.1]{MS2016}). 
Note that this normalization of $a_{2,0}$ is not used in this paper, because the statements of theorems and their proofs do not require this normalization. 
\end{remark}

A regular plane curve in the parameter space transverse to $\eta$ at $(0,0)$ is called a {\it tangential curve}. 
Let $\gamma(t)$ be a parameterization of the tangential curve. 
Clearly, $f\circ\gamma$ is tangent to the tangent line of the singular surface. 

\begin{remark}
By using Proposition \ref{prop:properties_of_normal_form}, it is easily seen that the singular point $f(\bm{0})$ is a parabolic point if and only if $f\circ\gamma$ has exactly 2-point contact with the principal plane at $f(\bm{0})$.
On the other hand, $f(\bm{0})$ is an inflection point if and only if $f\circ\gamma$ has an inflectional tangent to the principal plane at $f(\bm{0})$, that is, $f\circ\gamma$ has at least 3-point contact with the principal plane there.
\end{remark}


\section{Singularities of height functions}

We define the family of functions on a surface $S$ parameterized by a smooth map-germ $f:(\R^2,\bm{0})\to(\R^3,\bm{0})$ by
\[
H:(\R^2 \times S^2,(\bm{0}, \bm{w}_0))\to \R, \quad H(u,v,\bm{w}) = \langle f(u,v),\,\bm{w} \rangle,
\]
where $S^2$ is the unit sphere in $\R^3$ and $\langle\cdot, \cdot \rangle$ denotes the Euclidean inner product in $\R^3$. 
We define the function $h_{\bm{w}}(u,v) = H(u,v,\bm{w})$, which is the {\it height function on $S$ along  $\bm{w}$}.
We regard $\bm{w} \in S^2$ as a unit vector in $\R^3$ and we write
$\bm{w} = (x,y,z)$.

The following theorem describes singularities of $h_{\bm{w}}$ on singular surfaces $S$ parameterized by smooth map-germs $\mathcal{A}$-equivalent to $H_k$, and $\mathcal{R}^+$-versality of the family $H$ of $h_{\bm{w}}$, in terms of the geometric notions introduced in Section~2. 
We do not recall here definitions of unfoldings and their $\mathcal{R}^+$- and $\mathcal{K}$-versality. 
See \cite{Arnold1986} for the definitions.
See also \cite{IRRT2015, Martinet1982, Wall1981}.

\begin{theorem}
\label{thm:height}
Let $S$ be a singular surface parameterized by a smooth map-germ $\mathcal{A}$-equivalent to $H_k$.
\begin{enumerate}
\item 
$h_{\bm{w}_0}$ has an $A_1$-singularity at $(0,0)$ if and only if $\bm{w}_0$ is in the normal plane but $\bm{w}_0$ is not the principal normal direction. 
When this is the case, $H$ is an $\mathcal{R}^+$-versal unfolding of $h_{\bm{w}_0}$. 

\item 
$h_{\bm{w}_0}$ has an $A_2$-singularity at $(0,0)$ if and only if $\bm{w}_0$ is the principal normal direction and the singular point of $S$ is not an inflection point. 
When this is the case, $H$ is not an $\mathcal{R}^+$-versal unfolding of $h_{\bm{w}_0}$. 

\item 
$h_{\bm{w}_0}$ does not have an $A_{\geq 3}$-singularity at $(0,0)$.

\item 
$h_{\bm{w}_0}$ has a $D_4$- or more degenerate singularity at $(0,0)$ if and only if $\bm{w}_0$ is the principal normal direction and the singular point of $S$ is an inflection point. 
When this is the case, $H$ is not an $\mathcal{R}^+$-versal unfolding.
\end{enumerate}
\end{theorem}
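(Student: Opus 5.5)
We work with the parameterization \eqref{eq:normal_form_2} from Proposition \ref{prop:normal_form}. Since $f$ is $\mathcal{A}$-equivalent to $H_k$, its $3$-jet must be $\mathcal{A}$-equivalent to $(u,uv,v^3)$. By the proof of Proposition \ref{prop:H_k}, this forces $a_{0,3}\neq0$. The geometric notions are then read off from Proposition \ref{prop:properties_of_normal_form}: the normal plane is the $yz$-plane, the principal normal is $\pm\partial_z$, and the point is an inflection point if and only if $a_{2,0}=0$.

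For $\bm{w}=(x,y,z)$ we write
\[
h_{\bm{w}} = x u + y\Bigl(uv+\tfrac{b_3}{6}v^3+\cdots\Bigr) + z\Bigl(\tfrac12 a_{2,0}u^2+\tfrac16(a_{3,0}u^3+3a_{2,1}u^2v+3a_{1,2}uv^2+a_{0,3}v^3)+\cdots\Bigr).
\]
The origin is a singular point if and only if $x=0$, i.e.\ $\bm{w}_0$ lies in the normal plane.

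\emph{Case $y\neq0$.} The Hessian is $\begin{pmatrix} z a_{2,0} & y\\ y & 0\end{pmatrix}$, whose determinant $-y^2$ is nonzero. Hence $h_{\bm{w}_0}$ has an $A_1$ singularity, and this happens exactly when $\bm{w}_0$ is not $\pm\partial_z$.

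\emph{Case $\bm{w}_0=\pm\partial_z$.} Then $h=\pm(\tfrac12 a_{2,0}u^2+\text{cubic}+\cdots)$.
\begin{itemize}
\item If $a_{2,0}\neq0$, the $2$-jet has rank $1$ with kernel along $v$. After completing the square in $u$, the restriction to the kernel direction begins with $\tfrac16 a_{0,3}v^3$, which is nonzero. This gives $A_2$ exactly.
\item If $a_{2,0}=0$, the $2$-jet vanishes, so the singularity is $D_4$ or worse.
\end{itemize}
Since an $A_{\geq3}$ singularity needs rank-one Hessian with vanishing cubic on the kernel, and $a_{0,3}\neq0$ rules this out, statement (3) follows. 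This establishes all the "if and only if" claims.

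\emph{Versality.} We use the criterion that $H$ is $\mathcal{R}^+$-versal iff
\[
\mathcal{E}_2=J(h_{\bm{w}_0})+\R\{1,\dot H_1,\dot H_2\}+\mathcal{M}_2^{\,\mu+1},
\]
or equivalently that the $\dot H_i$ span the local algebra modulo constants. Here $\dot H_i=\partial H/\partial w_i$ at $\bm{w}_0$, taken in local coordinates on $S^2$.

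For (1) with $\mu=1$, the tangent directions to $S^2$ at $\bm{w}_0=(0,y_0,z_0)$ include $\partial_x$. The corresponding variation is $u+O(2)$, and the other is $-z_0(uv+\cdots)+y_0(\tfrac12a_{2,0}u^2+\cdots)$. The local algebra of $A_1$ is $\R$ already, spanned by $1$, so versality is automatic. Equivalently, $J(h)=\mathcal{M}_2$.

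For (2), $\mu=2$, and the local algebra modulo constants is spanned by the classes of $u$ and $v$ (or $v$ and $v^2$, depending on the chosen normal form). The derivatives at $\bm{w}_0=\partial_z$ are $\dot H_x=u$ and $\dot H_y=uv+\tfrac{b_3}6v^3+\cdots$. The latter lies in $\mathcal{M}_2^2$. To check it is zero in $\mathcal{M}_2/(J+\mathcal{M}_2^2)$ we compute
\[
J=\bigl(a_{2,0}u+\tfrac12 a_{2,1}\ldots,\ \tfrac12 a_{0,3}v^2+a_{1,2}uv+\cdots\bigr).
\]
Modulo $J$ we have $u\sim O(v^2)$, hence $uv\sim O(v^3)\in J+\mathcal{M}^3$. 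So the $\dot H_i$ span at most the class of $u$, which is itself a multiple of $v^2$. The class of $v$ is never hit, and $H$ is not versal.

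For (4), the codimension of $D_4$ is $\mu-1\geq3$, which exceeds the two available parameters. Hence $H$ cannot be versal.

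The main obstacle is the careful computation in (2): verifying that the class of $v$ is not in the span requires tracking the $\mathcal{M}_2^2$-contributions modulo the Jacobian ideal.
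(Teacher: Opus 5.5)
Your proof is correct and follows essentially the same route as the paper: reduce to the form \eqref{eq:normal_form_2} with $a_{0,3}\neq0$, classify via the $2$-jet $\frac12 a_{2,0}z_0u^2+y_0uv$ and the $v^3$-coefficient on the Hessian kernel, test versality with the infinitesimal criterion, and use a parameter count for $D_{\geq4}$. Your remark that $A_1$-versality is automatic because $J(h_{\bm{w}_0})=\mathcal{M}_2$ neatly replaces the paper's rank table; one small slip is that for $A_2$ the local algebra modulo constants is one-dimensional, spanned by the class of $v$ alone (indeed $\mathcal{M}_2^2\subseteq J(h_{\bm{w}_0})$, so $u$ and $\dot H_y$ already vanish there), which only reinforces your conclusion that $v$ is never hit.
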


\begin{remark}
It is well known that an $A_3$-singularity of a height function on a regular surface captures a cusp of Gauss of the surface.
In contrast, Theorem \ref{thm:height}(3) shows that height functions on a singular surface with an $H_k$-singularity do not detect an analogue of the cusp of Gauss of a regular surface.
\end{remark}

To prove Theorem \ref{thm:height}, we give the following proposition:

\begin{proposition}
\label{prop:height}
Let $S$ be parameterized by $f$ in the form \eqref{eq:normal_form_2}.
\begin{enumerate}
\item 
$h_{\bm{w}_0}$ has an $A_1$-singularity at $(0,0)$ if and only if $\bm{w}_0$ is in the normal plane $($i.e., $x_0 = 0)$ but $\bm{w_0}$ is not the principal normal vector $($i.e., $\bm{w}_0 \ne \pm(0,0,1))$.
When this is the case, $H$ is an $\mathcal{R}^+$-versal unfolding of $h_{\bm{w}_0}$. 

\item 
$h_{\bm{w}_0}$ has an $A_2$-singularity at $(0,0)$ if and only if $\bm{w}_0$ is the principal normal vector, $f(\bm{0})$ is not an inflection point $($i.e., $a_{2,0} \ne 0)$ and $a_{0,3} \ne 0$. 
When this is the case, $H$ is not an $\mathcal{R}^+$-versal unfolding of $h_{\bm{w}_0}$.

\item 
$h_{\bm{w}_0}$ has an $A_3$-singularity at $(0,0)$ if and only if $\bm{w}_0$ is the principal normal vector, $f(\bm{0})$ is not an inflection point, $a_{0,3} = 0$, and $a_{0,4} a_{2,0} - 3a_{1,2}^2 \ne 0$.
When this is the case, $H$ is not an $\mathcal{R}^+$-versal unfolding of $h_{\bm{w}_0}$.

\item 
$h_{\bm{w}_0}$ has an $A_{\geq 4}$-singularity at $(0,0)$ if and only if $\bm{w}_0$ is the principal normal vector, $f(\bm{0})$ is not an inflection point and $a_{0,3} = a_{0,4} a_{2,0} - 3 a_{1,2}^2 = 0$. 
When this is the case, $H$ is not an $\mathcal{R}^+$-versal unfolding of $h_{\bm{w}_0}$.

\item 
$h_{\bm{w}_0}$ has a $D_4$- or more degenerate singularity at $(0,0)$ if and only if $\bm{w}_0$ is the principal normal vector and $f(\bm{0})$ is an inflection point.
When this is the case, $H$ is not an $\mathcal{R}^+$-versal unfolding of $h_{\bm{w}_0}$.
\end{enumerate}
\end{proposition}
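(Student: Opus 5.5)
We work directly from the normal form \eqref{eq:normal_form_2} and write $\bm{w}=(x,y,z)$, so that
\[
h_{\bm{w}}(u,v)=x\,u+y\Bigl(uv+\sum\tfrac{b_i}{i!}v^i+\cdots\Bigr)+z\Bigl(\tfrac12 a_{2,0}u^2+\sum\tfrac{a_{i,j}}{i!j!}u^iv^j+\cdots\Bigr).
\]
The linear part is $x\,u$, so $h_{\bm{w}_0}$ is singular at $\bm{0}$ if and only if $x_0=0$, that is, $\bm{w}_0$ lies in the normal plane. With $x_0=0$ the Hessian is
\[
\begin{pmatrix} z_0a_{2,0} & y_0\\ y_0 & 0\end{pmatrix},
\]
whose determinant is $-y_0^2$. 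Hence $h_{\bm{w}_0}$ is Morse exactly when $y_0\neq0$, i.e. $\bm{w}_0\neq\pm(0,0,1)$, which gives the first half of (1). For versality we use the standard criterion: $H$ is $\mathcal{R}^+$-versal at $\bm{w}_0$ if
\[
\mathcal{E}_2=J(h_{\bm{w}_0})+\R\{1\}+\R\{\partial H/\partial w_1|_{\bm{w}_0},\partial H/\partial w_2|_{\bm{w}_0}\}+\mathcal{M}_2^{k+1}
\]
for suitable $k$. In case (1) the Jacobian ideal is already $\mathcal{M}_2$. The tangent directions to $S^2$ at $\bm{w}_0=(0,y_0,z_0)$ are $(1,0,0)$ and $(0,-z_0,y_0)$, so the derivatives include $u$ and $-z_0(uv+\cdots)+y_0(\cdots)$, and together these span $\mathcal{M}_2/\mathcal{M}_2^2$ modulo $\mathcal{M}_2^2$. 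Versality follows.

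For the remaining cases take $\bm{w}_0=(0,0,\pm1)$, so $h=\pm\bigl(\tfrac12a_{2,0}u^2+\tfrac12a_{1,2}uv^2+\tfrac16a_{0,3}v^3+\cdots\bigr)$; note there is no $uv$ term and no $v^2$ term. If $a_{2,0}\neq0$ the $2$-jet is $u^2$ and we compute the $A_k$ type by completing the square. The substitution $u\mapsto u-\frac{a_{1,2}}{2a_{2,0}}v^2+\cdots$ eliminates the $uv^2$ term and leaves the $v$-part
\[
\tfrac16a_{0,3}v^3+\Bigl(\tfrac1{24}a_{0,4}-\tfrac{a_{1,2}^2}{8a_{2,0}}\Bigr)v^4+\cdots .
\]
This yields $A_2$ iff $a_{0,3}\neq0$, $A_3$ iff $a_{0,3}=0$ and $a_{0,4}a_{2,0}-3a_{1,2}^2\neq0$, and $A_{\ge4}$ otherwise; these are items (2)–(4). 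One must also check that the $u^2v$ term $a_{2,1}$ only affects higher order: it is absorbed by $u\mapsto u(1+cv)$ and does not change the $v^3$ and $v^4$ coefficients at the relevant stage, since the term is $u^2\cdot v$. If $a_{2,0}=0$ the $2$-jet vanishes. The $3$-jet is then
\[
\tfrac16a_{3,0}u^3+\tfrac12a_{2,1}u^2v+\tfrac12a_{1,2}uv^2+\tfrac16a_{0,3}v^3,
\]
so the singularity is $D_4$ or worse, giving (5).

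Non-versality in (2)–(5) is where the count matters. An $A_k$ singularity with $k\ge2$ needs $k-1\ge1$ unfolding directions beyond the constants to cover the missing non-constant monomials. We have only two parameter derivatives, $u$ and $uv+\sum b_iv^i/i!+\cdots$, and both lie in the ideal $\langle u\rangle+\mathcal{M}_2^3$. For $A_2$ with normal form $u^2+v^3$ the local algebra modulo constants needs $v$, and $v\notin\langle u,\,uv+b_3v^3/6\rangle+J(h)+\mathcal{M}_2^2$ because $J(h)\subset\langle u\rangle+\mathcal{M}_2^2$. So we show that the $v$-linear direction is never hit. Concretely, $J(h)+\R\{1,u,uv+\cdots\}$ modulo $\mathcal{M}_2^2$ misses $v$, which proves non-versality in (2)–(4). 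For $D_4$ and worse, $\mathcal{R}^+$-versality needs at least $3$ parameters ($\mu-1\ge3$), while we have only $2$, so it fails automatically.

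The main obstacle I expect is carrying out the completing-square computation in (3)–(4) carefully, so that the invariant $a_{0,4}a_{2,0}-3a_{1,2}^2$ comes out with the right constant. I would verify this by directly computing the restriction of $h$ to the curve $\partial h/\partial u=0$, solved as $u=-\frac{a_{1,2}}{2a_{2,0}}v^2+O(v^3)$ when $a_{0,3}=0$. Theorem \ref{thm:height} then follows because $a_{0,3}\neq0$ for $H_k$ by Proposition \ref{prop:H_k}.
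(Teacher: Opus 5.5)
Your proposal is correct and follows the paper's route. The 1-jet and Hessian analysis, the completion of the square that yields the $v^4$ coefficient $(a_{0,4}a_{2,0}-3a_{1,2}^2)/(24a_{2,0})$, the infinitesimal versality criterion and the parameter count for $D_4$ all match. There are two small streamlinings: you get $A_1$-versality directly from $J(h_{\bm{w}_0})=\mathcal{M}_2$, and you obtain non-versality uniformly for all $A_{\geq 2}$ by noting that $v$ is missed modulo $\mathcal{M}_2^2$, whereas the paper checks modulo $\mathcal{M}_2^4$ and uses a parameter count for $A_{\geq 4}$. One side remark of yours is false but harmless: $u$ and $-z_0 f_2+y_0 f_3$ alone do not span $\mathcal{M}_2/\mathcal{M}_2^2$, since the latter lies in $\mathcal{M}_2^2$; it is unnecessary because $J(h_{\bm{w}_0})=\mathcal{M}_2$ already.
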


\begin{proof}
First, we prove the necessary and sufficient condition for $h_{\bm{w}_0}$ having a singularity of type $A_k$ and $D_k$. 
Since $\partial h_{\bm{w}_0}/\partial u = x_0$ and $\partial h_{\bm{w}_0}/\partial v = 0$ at $(0,0)$, $h_{\bm{w}_0}$ is singular if and only if $x_0 = 0$.
If $x_0 = 0$, then
\begin{equation}
\label{eq:2jet_of_h}
h_{\bm{w}_0} = \frac12 a_{2,0} z_0 u^2 + y_0 u v + O(u,v)^3,
\end{equation}
and thus $\det \mathcal{H}_{h_{\bm{w}_0}}(\bm{0}) = - y_0^2$, where $\mathcal{H}_{h_{\bm{w}_0}}$ is the Hessian matrix of $h_{\bm{w}_0}$. 
Hence, (1) is proved.

Assume that $\bm{w}_0 = \pm (0,0,1)$.
Then $h_{\bm{w}_0}$ has a degenerate singularity at $(0,0)$.
It follows from \eqref{eq:2jet_of_h} that $h_{\bm{w}_0}$  has a $D_4$ or more degenerate singularity if and only if $a_{2,0} = 0$.
Assume that $a_{2,0}\ne0$.
By the right coordinate change $\phi(u,v)=(u, v - a_{1,2}v^2/(2a_{2,0}))$ we show that 
\begin{equation*}
h \circ \phi = \pm\left(\dfrac{a_{2,0}}2 u^2 + \dfrac16(a_{3,0}u^2 + 3 a_{2,1} u^2 v + a_{0,3} v^3)\right) + O(u,v)^4
\end{equation*}
and the coefficient of $v^4$ of $h \circ \phi$ is $\pm (a_{0,4}a_{2,0} - 3a_{1,2}^2)/(24a_{2,0})$,  
which proves assertions (2)--(4).

Next, we examine versal unfoldings of $h_{\bm{w}_0}$.
Assume that $h_{\bm{w}_0}$ has an $A_1$-singularity at $(0,0)$.
We may assume that $y\ne0$ near $(u,v,\bm{w})=(0,0,\bm{w}_0)$.
Set $y = \pm \sqrt{1-x^2-z^2}$.
Since $A_1$-singularity is 2-determined, we only have to verify that
\begin{equation}
\label{eq:R_versal_of_h_1}
\mathcal{E}_2 = \left\langle\dfrac{\partial h_{\bm{w}_0}}{\partial u},\dfrac{\partial h_{\bm{w}_0}}{\partial v}\right\rangle_{\mathcal{E}_2} +\left\langle\left.\dfrac{\partial H}{\partial x}\right|_{\R^2\times\{\bm{w}_0\}}, \left.\dfrac{\partial H}{\partial z}\right|_{\R^2\times\{\bm{w}_0\}}\right\rangle_{\R} + \langle 1 \rangle_{\R} + \mathcal{M}_2^3
\end{equation}
holds (see, for example, 
\cite{Martinet1982}), where $\mathcal{E}_2$ is the set of smooth function-germs $(\R^2,\bm{0})\to\R$.
The coefficients of $u^i v^j$ of functions in \eqref{eq:R_versal_of_h_1} are given by the following table:
\[
\begin{array}{c|cc|ccc}
& u & v & u^2 & u v & v^2 \\ \hline
H_x\left|_{\R^2\times\{\bm{w}_0\}}\right. & 1 & 0 & 0 & 0 & 0 \\
H_z\left|_{\R^2\times\{\bm{w}_0\}}\right. & 0 & 0 & a_{2,0} & 0 & 0 \\ \hline
(h_{\bm{w}_0})_u & a_{2,0} z_0 & \pm y_0 & \frac12 a_{3,0} z_0 & a_{2,1} z_0 & \frac12 a_{1,2} z_0 \\
(h_{\bm{w}_0})_v & \pm y_0 & 0 & \frac12 a_{2,1} z_0 & a_{1,2} z_0 & \frac12(a_{0,3} z_0 \pm b_3 y_0) \\ \hline
u (h_{\bm{w}_0})_v & 0 & 0 & \pm y_0 & 0 & 0 \\
u (h_{\bm{w}_0})_u & 0 & 0 & a_{2,0} z_0 & \pm y_0 & 0 \\
v (h_{\bm{w}_0})_u & 0 & 0 & 0 & a_{2,0} z_0 & \pm y_0\\
\end{array}
 \]
Since $y_0 \ne 0$, the matrix represented by the above table is of full rank, that is, \eqref{eq:R_versal_of_h_1} holds.

Assume that $h_{\bm{w}_0}$ has an $A_2$-singularity at $(0,0)$.
We may assume that $z\ne0$ near $(u,v,\bm{w}_0) = (0,0,\bm{w}_0)$.
Set $z = \pm\sqrt{1-x^2-y^2}$.
Since $A_2$-singularity is 3-determined, we have to check the equality
\begin{equation}
\label{eq:R_versal_of_h_2}
\mathcal{E}_2 = \left\langle\dfrac{\partial h_{\bm{w}_0}}{\partial u},\dfrac{\partial h_{\bm{w}_0}}{\partial v} \right\rangle_{\mathcal{E}_2} +\left\langle\left.\dfrac{\partial H}{\partial x}\right|_{\R^2\times\{\bm{w}_0\}}, \left.\dfrac{\partial H}{\partial y}\right|_{\R^2\times\{\bm{w}_0\}}\right\rangle_{\R} + \langle 1 \rangle_{\R} +\mathcal{M}_2^4.
\end{equation}
We have
\begin{gather*}
H_x|_{\R^2 \times \{\bm{w}_0\}} = u, \quad H_y|_{\R^2 \times \{\bm{w}_0\}} = u v,\\
(h_{\bm{w}_0})_u = \pm a_{2,0} u + O(u,v)^2,\quad (h_{\bm{w}_0})_v = \pm\frac12(a_{2,1}u^2 + 2a_{1,2}u v + a_{0,3}v^2)+ O(u,v)^3.
\end{gather*}
Hence, \eqref{eq:R_versal_of_h_2} does not hold, and $H$ is not an $\mathcal{R}^+$-versal unfolding of $h_{\bm{w}_0}$.
For the same reason, $H$ is not an $\mathcal{R}^+$-versal unfolding of $h_{\bm{w}_0}$ having an $A_3$-singularity.

The number of parameters in an $\mathcal{R}^+$-miniversal unfolding of $A_4$ is 3.
Since $H$ is a 2-parameter unfolding of $h_{\bm{w}_0}$, $H$ is not an $\mathcal{R}^+$-versal unfolding of $h_{\bm{w}_0}$ having $A_{\geq 4}$-singularity.
For the same reason, $H$ is not an $\mathcal{R}^+$-versal unfolding of
$h_{\bm{w}_0}$ having a $D_4$ or more degenerate singularity. 
\end{proof}

\begin{remark}
Proposition \ref{prop:height} is applicable more generally to singular surfaces parameterized by corank 1 map-germs whose 2-jets are $\mathcal{A}$-equivalent to $(u,uv,0)$. 
This class includes swallowtails (see \cite{Saji2018}).
\end{remark}

\begin{proof}[Proof of Theorem \ref{thm:height}]
Since the 2-jet of the normal form of $H_k$ is $\mathcal{A}$-equivalent to $(u,uv,0)$, by Proposition \ref{prop:normal_form}, we may assume that $S$ is parameterized by $f$ in the form \eqref{eq:normal_form_2}. 
Moreover, the $3$-jet of $f$ is $\mathcal{A}$-equivalent to $(u,uv,v^3)$, which is the 3-jet of the normal form of $H_k$. 
It follows from the proof of the first assertion of Proposition \ref{prop:H_k} that $a_{0,3}\ne0$.
Therefore, the assertions immediately follow from Proposition \ref{prop:height}. 
\end{proof}

We define the family of functions on $S$ by
$$
\tilde{H}\colon (\R^2 \times S^2 \times \R, (\bm{0},\bm{w}_0,t_0)) \to \R, \quad \tilde{H}(u,v,\bm{w},t)=\langle f(u,v),\bm{w}\rangle - t.
$$
We also define the function $\tilde{h}_{\bm{w},t}(u,v)=\tilde{H}(u,v,\bm{w},t)$, which is called the \emph{extended height function on $S$ along $\bm{w}$}. 

Since $\tilde{H}_t = -1$, from the proof of Proposition \ref{prop:height} and Theorem \ref{thm:height} we have the following corollary:

\begin{corollary}
\label{thm:col}
Let $S$ be a singular surface parameterized by a smooth map-germ $\mathcal{A}$-equivalent to $H_k$.
\begin{enumerate}
\item 
$\tilde{h}_{\bm{w}_0,t_0}$ has an $A_1$-singularity at $(0,0)$ if and only if $\bm{w}_0$ is in the normal plane but $\bm{w}_0$ is not the principal normal direction. 
When this is the case, $\tilde{H}$ is a $\mathcal{K}$-versal unfolding of $\tilde{h}_{\bm{w}_0,t_0}$. 

\item 
$\tilde{h}_{\bm{w}_0,t_0}$ has an $A_2$-singularity at $(0,0)$ if and only if $\bm{w}_0$ is the principal normal direction and the singular point of $S$ is not an inflection point. 
When this is the case, $\tilde{H}$ is not a $\mathcal{K}$-versal unfolding of $\tilde{h}_{\bm{w}_0,t_0}$. 

\item 
$\tilde{h}_{\bm{w}_0,t_0}$ does not have an $A_{\geq 3}$-singularity at $(0,0)$.

\item 
$\tilde{h}_{\bm{w}_0,t_0}$ has a $D_4$- or more degenerate singularity at $(0,0)$ if and only if $\bm{w}_0$ is the principal normal direction and the singular point of $S$ is an inflection point. 
When this is the case, $\tilde{H}$ is not a $\mathcal{K}$-versal unfolding of $\tilde{h}_{\bm{w}_0,t_0}$
\end{enumerate}
\end{corollary}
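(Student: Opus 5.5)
Since $\tilde h_{\bm{w},t}=h_{\bm{w}}-t$, the $\mathcal{K}$-type of $\tilde h_{\bm{w}_0,t_0}$ at $(0,0)$ is only meaningful when $\tilde h_{\bm{w}_0,t_0}(0,0)=0$. Because $f(\bm{0})=\bm{0}$, this forces $t_0=0$, and then $\tilde h_{\bm{w}_0,0}=h_{\bm{w}_0}$. For function germs vanishing at the origin, $\mathcal{K}$-equivalence and $\mathcal{R}$-equivalence give the same $A_k$/$D_k$ classification: multiplying by a unit $\lambda$ with $\lambda(\bm{0})>0$ can be absorbed into the right coordinate change, and the sign is exactly the $\pm$ already in the normal forms. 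Hence the singularity-type assertions in (1)--(4) follow directly from Theorem \ref{thm:height}, which in turn rests on Proposition \ref{prop:height} and on $a_{0,3}\neq 0$.

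For the versality claims I will use the standard correspondence. An unfolding $\tilde H(u,v,\bm{w},t)=H(u,v,\bm{w})-t$ is $\mathcal{K}$-versal if and only if $H$ is $\mathcal{R}^+$-versal, because $\tilde H_t=-1$ supplies exactly the constants $\langle 1\rangle_{\R}$. Concretely, the $\mathcal{K}$-versality condition reads
\[
\mathcal{E}_2=\langle h_u,h_v,h\rangle_{\mathcal{E}_2}+\langle H_x|,H_z|,1\rangle_{\R}+\mathcal{M}_2^{r+1}.
\]
Here $h=h_{\bm{w}_0}$, $r$ is the determinacy degree, and the roles of $x,z$ or $x,y$ depend on the case. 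The $\mathcal{R}^+$ condition used in Proposition \ref{prop:height} has the same form without the term $\langle h\rangle_{\mathcal{E}_2}$.

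In case (1), $h$ is quasi-homogeneous of degree two up to higher-order terms, so $h\in\mathcal{M}_2\langle h_u,h_v\rangle+\mathcal{M}_2^3$. Thus the extra term $\langle h\rangle$ adds nothing modulo $\mathcal{M}_2^3$, and the rank computation in the table of Proposition \ref{prop:height} gives $\mathcal{K}$-versality directly. In fact, adding generators can only enlarge the span, so the table already suffices.

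For non-versality in (2) and (4) the direction is the delicate one: adding $\langle h\rangle_{\mathcal{E}_2}$ might enlarge the span. I will handle it as follows.

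In case (2), $h=\pm\frac{a_{2,0}}2u^2+O(3)$, so $h\in\mathcal{M}_2\langle h_u\rangle+\mathcal{M}_2^3$. At degree one, the span consists of $u$ (from $H_x$ and $h_u$) and nothing else, since $H_y|=uv$ is quadratic and $h_v\in\mathcal{M}_2^2$. Therefore $v\notin$ the span modulo $\mathcal{M}_2^2$, and the equality fails.

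In case (4), the germ is of $D_{\geq4}$ type, so $h\in\mathcal{M}_2^3$. The same degree-one argument applies: $h_u$ and $h_v$ lie in $\mathcal{M}_2^2$ and the parameters contribute only $u$. Alternatively, one can count parameters: $D_4$ has $\mathcal{K}_e$-codimension $3$ (the miniversal unfolding needs $1,u,v$ plus one more), while $\tilde H$ has only 3 parameters and the degree-one argument already shows $v$ is missing.

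The main obstacle I expect is making sure the extra term $\langle h\rangle$ cannot rescue versality in case (2). I will settle this with the degree-one check above, which is insensitive to that term because $h\in\mathcal{M}_2^2$.
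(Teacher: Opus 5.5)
Your proposal is correct and follows the paper's argument: the paper just observes $\tilde H_t=-1$ and transfers Theorem~\ref{thm:height} together with the versality computations in the proof of Proposition~\ref{prop:height}. You additionally make explicit that $t_0=0$, that $\mathcal{K}$- and $\mathcal{R}$-types agree here, and, via the degree-one check, that the extra term $\langle h_{\bm{w}_0}\rangle_{\mathcal{E}_2}$ cannot restore versality in (2) and (4). The only slip is the parenthetical count in (4): the $\mathcal{K}_e$-codimension of $D_4$ is $4$ (exactly your ``$1,u,v$ plus one more''), not $3$, so counting alone would already suffice there, though your degree-one argument makes this immaterial.
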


\section{Local branches of parabolic sets}

In \cite{West1995}, J.~M.~West classified Whitney umbrellas generically into two types in terms of the singularity of the parabolic set in the source. 
A Whitney umbrella whose parabolic set has an $A_1^+$-singularity is classified as a {\it hyperbolic Whitney umbrella}.
A Whitney umbrella whose parabolic set has an $A_1^-$-singularity is classified as an {\it elliptic Whitney umbrella}.
The height function on a hyperbolic Whitney umbrella in any direction in the normal plane has an $A_1$-singularity.
On the other hand, there are two directions in which the height function on an elliptic Whitney umbrella has an $A_{\geq 2}$-singularity.
Each branch of the parabolic set of the elliptic Whitney umbrella is associated with one of the two directions, and the torsion and its derivative of the branch on the elliptic Whitney umbrella relate to the type of the singularity of the height function (\cite[Theorem~2.2]{O-ST2015}). 

Suppose that $S$ is a singular surface parameterized by a smooth map-germ $\mathcal{A}$-equivalent to one of $S_k$, $B_k$, $C_k$ and $F_4$, and that its singular point is not an inflection point. 
T.~Fukui and the author showed that there are two directions in which the height function on $S$ has an $A_{\geq 2}$-singularity.
One of the two directions is the principal normal direction.
The other direction is associated with a special branch of the parabolic set on $S$, and the torsion and its derivative of the branch on $S$ relate to the type of the singularity of the height function (\cite[Theorem 4.2]{FH2025}). 
For comparison, a corresponding relation for regular surfaces is described in \cite[Proposition 4.3]{FH2025}. 

R.~Oset Sinha and F.~Tari determined the generic singularities of the parabolic set in the source of several singular surfaces whose parametrizations have an $\mathcal{A}$-singularity, including those with $H_2$ and $H_3$ (\cite[Theorem~2.7]{O-ST2015}). 
In particular, they showed that the parabolic set generically has a $D_5$-singularity for both $H_2$ and $H_3$. 

Let $S$ be a singular surface parameterized by a smooth map-germ $f\colon(\R^2,\bm{0})\to(\R^3,\bm{0})$ which is $\mathcal{A}$-equivalent to $H_k$. 
Then the parabolic set of $S$ is given by the zero set of 
\[
P(u,v) = (\langle f_u \times f_v, f_{u u} \rangle\langle f_u \times f_v, f_{v v} \rangle - \langle f_u \times f_v, f_{u v} \rangle^2)(u,v).
\]
If $f$ is given in the form \eqref{eq:normal_form_2}, then
\begin{align*}
\begin{split}
j^4 P(\bm{0}) & =  a_{0,3}a_{2,0}u^2 v + a_{1,2}a_{2,0}u^3 + \frac14(4a_{0,3}a_{1,2} + 2a_{2,2}a_{2,0} - a_{2,1}^2)u^4\\
& \quad + \frac12(2a_{1,3}a_{2,0} + 2a_{3,0}a_{0,3} + 2a_{2,1}a_{1,2} - a_{2,1}a_{2,0}b_3)u^3 v\\
& \quad + \frac12(a_{0,4}a_{2,0} + 3a_{2,1}a_{0,3})u^2 v^2 + \frac12a_{0,3}a_{2,0}b_3 u v^3 - \frac14a_{0,3}^2 v^4.
\end{split}
\end{align*}
Since $a_{0,3} \ne 0$, substituting $v$ by $- a_{1,2}u/a_{0,3} + v$, we show that  $j^3 P(\bm{0}) = a_{0,3}a_{2,0}u^2 v$ and the coefficient of $v^4$ of $P$ is $-a_{0,3}^2/4\ne0$. 
It follows that the parabolic set has a $D_5$-singularity at $(0,0)$ if and only if $a_{2,0} \ne0$, namely the singular point of $S$ is not an inflection point. 
When the parabolic set has a $D_5$-singularity, the parabolic set has two local branches: one is a regular curve and the other is a $(2,3)$-cusp.

\begin{theorem}
\label{thm:branch}
Let $S$ be a singular surface parameterized by a smooth map-germ $\mathcal{A}$-equivalent to $H_k$, and suppose that the singular point of $S$ is not an inflection point. 
Let $\gamma(t)$ and $\hat\gamma(t)$ be parameterizations, respectively, of the regular and singular branch of the parabolic set on $S$ with $\gamma(0)$ and $\hat\gamma(0)$ being the singular point. 
Let $\bm{b}(t)$ and $\hat{\bm{b}}(t)$ be the unit binormal vectors of $\gamma$ and $\hat\gamma$, respectively. 
Then the height function on $S$ along $\pm\bm{b}(0)$ has an $A_1$-singularity and that along $\pm\lim_{t \to 0^+}\hat{\bm{b}}(t)$ has an $A_2$-singularity at the singular point of $S$.
\end{theorem}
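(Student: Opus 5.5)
We work throughout with the normal form \eqref{eq:normal_form_2}. As in the proof of Theorem \ref{thm:height}, we have $a_{0,3}\neq 0$, and the non-inflection hypothesis gives $a_{2,0}\neq0$ by Proposition \ref{prop:properties_of_normal_form}. By Proposition \ref{prop:properties_of_normal_form} and Theorem \ref{thm:height}, it then suffices to prove two facts. First, $\bm{b}(0)$ lies in the $yz$-plane but is not $\pm\partial_z$. Second, $\lim_{t\to0^+}\hat{\bm{b}}(t)=\pm\partial_z$. The plan is to compute explicit low-order parameterizations of the two branches of $\{P=0\}$, push them forward by $f$, and read off the osculating planes.

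\textbf{Branch parameterizations.} After the substitution $v\mapsto v-a_{1,2}u/a_{0,3}$ used before the statement, the relevant part of $P$ is
\[
a_{0,3}a_{2,0}u^2v-\tfrac14a_{0,3}^2v^4
\]
plus terms that are higher in the weighted sense. The Newton polygon splits this into two branches. The regular branch is $v=O(u^2)$ in the new coordinates. The cusp branch is $u^2=\frac{a_{0,3}}{4a_{2,0}}v^3+\cdots$. In original coordinates we may therefore take
\[
\gamma(t)=\bigl(t,\,-\tfrac{a_{1,2}}{a_{0,3}}t+O(t^2)\bigr),\qquad
\hat\gamma(t)=\bigl(\alpha t^3+O(t^4),\,t^2+O(t^3)\bigr),
\]
where $\alpha^2=a_{0,3}/(4a_{2,0})$. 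For the cusp branch to be real we need $a_{0,3}/a_{2,0}>0$. The statement implicitly presupposes this, since it speaks of a singular branch; if necessary one uses $v=-t^2$ instead, which is harmless. In either case $\alpha\neq0$.

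\textbf{Regular branch.} Since $\gamma'(0)=(1,-a_{1,2}/a_{0,3})$ is transverse to $\eta=\partial_v$, the image $f\circ\gamma$ is regular. Expanding gives
\[
f\circ\gamma(t)=\Bigl(t,\;-\tfrac{a_{1,2}}{a_{0,3}}t^2+O(t^3),\;\tfrac12a_{2,0}t^2+O(t^3)\Bigr).
\]
Here only $uv$ contributes to the second coordinate at order $2$, and only $\frac12a_{2,0}u^2$ contributes to the third. Hence
\[
(f\circ\gamma)'(0)=(1,0,0),\qquad (f\circ\gamma)''(0)=(0,-2a_{1,2}/a_{0,3},a_{2,0}),
\]
and the second vector is nonzero because $a_{2,0}\neq0$. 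Taking the cross product,
\[
\bm{b}(0)\parallel\bigl(0,-a_{2,0},-2a_{1,2}/a_{0,3}\bigr).
\]
This vector lies in the normal plane, and its $y$-component $-a_{2,0}$ is nonzero, so it is not $\pm\partial_z$. Theorem \ref{thm:height}(1) then gives an $A_1$-singularity along $\pm\bm{b}(0)$.

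\textbf{Singular branch.} This is the main obstacle, because $f\circ\hat\gamma$ is singular at $t=0$ and the binormal has to be obtained as a limit. The approach is to track the orders in $t$ of each component:
\begin{itemize}
\item $x=u=\alpha t^3+\beta t^4+\cdots$;
\item $y=uv+\frac{b_3}{6}v^3+\cdots=\alpha t^5+O(t^6)$;
\item $z=\frac12a_{2,0}u^2+\frac16a_{0,3}v^3+\frac12a_{1,2}uv^2+\cdots=O(t^6)$.
\end{itemize}
To justify these orders, note that every monomial $u^iv^j$ has order $3i+2j$ in $t$. In the $y$-component the only order-$5$ monomial is $uv$, since $u^iv^j$ with $i\geq3$ or $j\geq3$ already has order at least $6$. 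In the $z$-component, every monomial has degree at least $2$ except that $uv$ and $v^2$ are absent, so the order is at least $6$.

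For a curve $c(t)=\sum_n c_nt^n$, the limiting osculating plane is spanned by the first nonzero coefficient $c_{n_1}$ together with the first coefficient $c_{n_2}$ not parallel to it. Here
\[
c_3=(\alpha,0,0),\qquad c_4=(\beta,0,0)\parallel c_3,\qquad c_5=(*,\alpha,0).
\]
Since $\alpha\neq0$, $c_5$ is not parallel to $c_3$, so the limiting osculating plane is the $xy$-plane, i.e.\ the principal plane. Therefore $\lim_{t\to0^+}\hat{\bm{b}}(t)=\pm\partial_z$ is the principal normal direction.

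To make this limit rigorous, I would write
\[
(f\circ\hat\gamma)'\times(f\circ\hat\gamma)''=t^{2}t^{3}\bigl(c\,\partial_z+O(t)\bigr)
\]
with $c\neq0$, and then normalize. Finally, Theorem \ref{thm:height}(2), together with $a_{2,0}\neq0$, yields an $A_2$-singularity along this direction.
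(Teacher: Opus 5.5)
Your proposal is correct and follows essentially the same route as the paper. Both use the normal form \eqref{eq:normal_form_2}, low-order parameterizations of the regular branch $v=-\frac{a_{1,2}}{a_{0,3}}u+O(u^2)$ and the cusp branch $u^2\sim\frac{a_{0,3}}{4a_{2,0}}v^3$, the expansions of their images under $f$ (your $c_3,c_5$ and $z=O(t^6)$ agree with the paper's $\hat\gamma$), the binormal computations, and Proposition~\ref{prop:height} / Theorem~\ref{thm:height}(1),(2). The one slip is your remark that a real cusp branch needs $a_{0,3}/a_{2,0}>0$: the branch exists for either sign and is parameterized by $v=\epsilon t^2$ with $\epsilon$ the sign of $a_{0,3}a_{2,0}$, as in the paper, and your fallback $v=-t^2$ already covers that case.
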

\begin{proof}
We take the parameterization $f$ of $S$ in \eqref{eq:normal_form_2}.
Remark that $a_{2,0}\ne0$ and $a_{0,3}\ne0$. 
Since the parabolic set has a $D_5$-singularity at $(0,0)$, it has a regular branch and a $(2,3)$-cusp branch. 
We may write their parameterizations in the forms
\begin{equation*}
\alpha(t) = \left(t,\, c_1 t + O(t^2)\right),\quad \hat\alpha(t) = (c_2 t^3 + O(t^4),\, \epsilon t^2)\quad(\epsilon=\pm1),
\end{equation*}
where $O(t^n)$ consists of the terms whose degrees are greater than or equal to $n$, respectively. 
Substituting these parameterizations into $P(u,v)=0$ and comparing the lowest-order terms, we obtain 
$$
a_{2,0}(a_{0,3}c_1+a_{1,2})=0,\qquad \dfrac{a_{0,3}}4(4\epsilon a_{2,0} c_2^2 - a_{0,3})=0,
$$
where $\epsilon = 1$ (resp. $-1$) if $a_{0,3}a_{2,0} > 0$ (resp. $< 0$). 
Hence, 
$$
c_1 = -\dfrac{a_{1,2}}{a_{0,3}},\qquad 4\epsilon a_{2,0} c_2^2 - a_{0,3} = 0.
$$
It follows that we can take $\gamma(t)$ and $\hat\gamma(t)$ in the forms
\begin{align*}
\gamma(t) & = f \circ \alpha(t) = \left(t,\, c_1 t^2 + O(t^3),\, \dfrac{a_{2,0}}2t^2 + O(t^3)\right), \\
\hat\gamma(t) & = f \circ \hat\alpha(t) = \left(c_2 t^3 + O(t^4),\, \epsilon c_2 t^5 + O(t^6),\, \dfrac{7\epsilon a_{0,3}}{24} t^6 + O(t^7)\right).
\end{align*}
Straightforward calculations show that
\[
\bm{b}(0) = \left(0,\, -\dfrac{a_{2,0}}{\sqrt{4c_1^2 + a_{2,0}^2}},\, \dfrac{2c_1}{\sqrt{4c_1^2 + a_{2,0}^2}}\right)\ne \pm(0,0,1)
\]
and
\[
\hat{\bm{b}}(t) =\dfrac{\left(\frac{35}4 c_2 a_{0,3}t^8 + O(t^9),\, -\frac{63}4 \epsilon c_2 a_{0,3} t^6 + O(t^7),\, 30 \epsilon c_2^2 t^5 + O(t^6)\right)}{30 c_2^2 |t^5| \sqrt{1 + O(1)}},
 \]
and thus $\lim_{t\to 0^+}\hat{\bm{b}}(t) = (0,0,\epsilon)$.
From (1) and (2) of Proposition \ref{prop:height}, we complete the proof. 
\end{proof}

\end{document}